\newcommand{\R}{\mathbb{R}}
\newcommand{\rnnn}{\mathbb R^n}
\newcommand{\sn}{ {\mathbb{S}^{n-1}}}
\newcommand{\psum}{{+_{\negthinspace\kern-2pt p}}\,}
\newcommand{\qsum}[1]{{+_{\negthinspace\kern-2pt #1}}\,}
\newcommand{\dpsum}{{\tilde+_{\negthinspace\kern-1pt p}}\,}
\newcommand{\dqsum}[1]{{\tilde+_{\negthinspace\kern-1pt #1}}\,}
\newcommand{\lsub}[1]{\hskip -1.5pt\lower.5ex\hbox{$_{#1}$}}
\numberwithin{equation}{section}
\newtheorem{theo}{Theorem}[section]
\newtheorem{lem}[theo]{Lemma}
\newtheorem{prop}[theo]{Proposition}
\newtheorem{rem}[theo]{Remark} \theoremstyle{definition}
\begin{document}

\title{From the Brunn-Minkowski inequality to a class of  \\
generalized Poincar\'{e}-type inequalities for torsional rigidity
}

\author[N. Fang]{Niufa Fang}
\address{School of Mathematics, Hunan University, Changsha, 410082, Hunan Province, China}
\email{fangniufa@hnu.edu.cn}

\author[J. Hu]{Jinrong Hu}
\address{School of Mathematics, Hunan University, Changsha, 410082, Hunan Province, China}
\email{hujinrong@hnu.edu.cn}

\author[L. Zhao]{Leina Zhao}
\address{College of Mathematics and Statistics, Chongqing Jiaotong University, Chongqing, 400074, China}
\email{zhao\_leina@163.com}

\begin{abstract}
In this paper, we establish a class of generalized Poincar\'{e}-type inequalities for torsional rigidity on the boundary of a convex body of class $C^{2}_{+}$ in $\rnnn$ by using the concavity of related Brunn-Minkowski inequality.

\end{abstract}
\keywords{Brunn-Minkowski inequality; Poincar\'{e} inequality; Torsional rigidity; Convex body }
\subjclass[2010]{52A40, 26D10}
\thanks{The research of the  first author  was supported by  NSFC (No. 12001291) and  the Natural Science Foundation of Hunan Province (No. 2022JJ30117)}
\maketitle

\baselineskip18pt

\parskip3pt

\section{Introduction}

The classical Brunn-Minkowski inequality plays an important role in the theory of convex bodies, which states that if $\Omega$ and $\widetilde{\Omega}$ are convex bodies in $\rnnn$ and $t\in [0,1]$, then
\begin{equation}\label{VE}
V((1-t)\Omega+t\widetilde{\Omega})^{1/n}\geq (1-t)V(\Omega)^{1/n}+tV(\widetilde{\Omega})^{1/n},
\end{equation}
where $V$ is the $n$-dimensional volume. For standard references  to the proof of \eqref{VE}, books such as Gardner \cite{G06} and Schneider \cite{S14} are recommended. In the beautiful survey, Gardner \cite{Ga02} illustrated that \eqref{VE} has many applications in \emph{Geometry and Analysis}. For instance,  the classical isoperimetric inequality can be deduced by using \eqref{VE}. Besides, by means of \eqref{VE}, Colesanti \cite{CA08} gave a different argument to derive Poincar\'{e} type inequalities on the boundary of convex bodies, while Poincar\'{e} and Brunn-Minkowski inequalities on the boundary of weighted Riemannian mainfolds were obtained by Kolesnikov-Milman \cite{KM18}. With the development of the classical Brunn-Minkowski inequality, several Brunn-Minkowski type inequalities have also been obtained for well-known functionals in the \emph{Calculus of Variations}, such as the first eigenvalue of the Laplacian, the Newton capacity and the torsional rigidity, see \cite{CA05, HSX18}. However, to our best of knowledge, researches on the Poincar\'{e} type inequalities for above functionals associated with boundary value problems are very few.

In this paper, we focus on torsional rigidity and add a new contribution on Poincar\'{e} type inequality for torsional rigidity. Let $\Omega$ be a convex body in the $n$-dimensional Euclidean space ${\rnnn}$. The torsional rigidity of $\Omega$ is defined as
\begin{equation}\label{cdf1*}
\frac{1}{T(\Omega)}=\inf\left\{\frac{\int_{\Omega}|\nabla U|^{2}{d}X}{(\int_{ \Omega}|U|{d}X)^{2}}:\ U\in W^{1,2}_{0}(\Omega)\ , \int_{\Omega}|U|{d}X> 0\right\}.
\end{equation}
For problem \eqref{cdf1*}, from \cite[p. 112]{CA05}, we know that there exists a unique function $U$ such that
\begin{equation}\label{cdf2*}
T(\Omega)=\int_{\Omega} |\nabla U|^{2}dX,
\end{equation}
where  $U$ is the solution of the boundary-value problem
\begin{equation}\label{capequ*}
\left\{
\begin{array}{lr}
\Delta U(X)= -2, & X\in \Omega, \\
U(X)=0,  & X\in  \partial \Omega.
\end{array}\right.
\end{equation}

 For two arbitrary convex bodies $\Omega, \widetilde{\Omega}$ in ${\rnnn}$, in analogy with the variational formula of volume (see, e.g., ~\cite{S14}),  Colesanti-Fimiani~\cite{CF10} obtained the variational formula of torsional rigidity,
\begin{equation}\label{capha*}
\frac{d}{dt}T(\Omega+t\widetilde{\Omega})\Big|_{t=0^+}=\int_{\partial \Omega}h(\widetilde{\Omega},\nu_{\Omega}(X))|\nabla U(X)|^{2}{d}\mathcal{H}^{n-1}(X),
\end{equation}
 where $h(\widetilde{\Omega},\cdot)$ is the support function of $\widetilde{\Omega}$, $\mathcal{H}^{n-1}(\cdot)$ is the $(n-1)$-dimensional Hausdorff measure, and $\nu_{\Omega}$ is the Gauss map of $\partial \Omega$.

In view of \eqref{capha*}, the torsional measure $\mu_{tor}(\Omega,\eta)$ is defined on the unit sphere ${\sn}$ by
\begin{equation}\label{capme*}
\mu_{tor}(\Omega,\eta)=\int_{F(\eta)}|\nabla U(X)|^{2}{d}\mathcal{H}^{n-1}(X)=\int_{\eta}|\nabla U(F(\xi))|^{2}dS_{\Omega}(\xi)
\end{equation}
for every Borel subset $\eta$ of ${\sn}$. Here $S_{\Omega}$ is the surface area measure of $\Omega$, and $F$ is the inverse of the Gauss map. Moreover, the
 estimates of harmonic function proved by Dahlberg \cite{D77} illustrate that $\nabla U$ has finite non-tangential limit at $\mathcal{H}^{n-1}$-a.e. on $\partial \Omega$ with $|\nabla U| \in L^{2}(\partial \Omega, \mathcal{H}^{n-1})$, which is not limited by the smooth condition on $\Omega$. Hence, $\eqref{capme*}$ is well defined a.e. on the unit sphere ${\sn}$.

The Brunn-Minkowski inequality for torsional rigidity tells that if $\Omega$ and $\widetilde{\Omega}$ are convex bodies in $\rnnn$ and $t\in [0,1]$, then
\begin{equation}\label{TE}
T((1-t)\Omega+t\widetilde{\Omega})^{1/(n+2)}\geq (1-t)T(\Omega)^{1/(n+2)}+tT(\widetilde{\Omega})^{1/(n+2)}.
\end{equation}
The above inequality was first gained by Borell \cite{BC85}, and Colesanti \cite{CA05} proved that equality occurs in \eqref{TE} if and only if $\Omega$ is homothetic to $\widetilde{\Omega}$.

 Denoted by ${\rm II}_{\partial \Omega}$ the second fundamental form of $\partial \Omega$. Inspired by \cite{CA08,J96}, we put forward an argument which illustrates that how to transfer the Brunn-Minkowski inequality to a Poincar\'{e}-type inequality for torsional rigidity on smooth convex body. The main result is showed as follows.

\begin{theo}\label{main}
Let $\Omega\subset \rnnn$ be a convex body of class $C^{2}_{+}$. For every $\psi \in  C^{1}(\partial \Omega)$, if
\begin{equation}\label{PE}
\int_{\partial \Omega}\psi|\nabla U|^{2}d \mathcal{H}^{n-1}(X)=0,
\end{equation}
then
\begin{equation}
\begin{split}
\label{tor2}
&-\int_{\partial \Omega}{\rm tr}({\rm II}_{\partial \Omega})\psi^{2}|\nabla U|^{2}d \mathcal{H}^{n-1}(X)-2 \int_{\partial \Omega }\psi(\nabla \dot{U}(X)\cdot \nu_{\Omega})|\nabla U|d \mathcal{H}^{n-1}(X)\\
&\quad \quad +4\int_{\partial \Omega}\psi^{2}|\nabla U|d \mathcal{H}^{n-1}(X)\leq \int_{\partial \Omega}({\rm II}_{\partial \Omega}^{-1}\nabla_{\partial \Omega}\psi\cdot \nabla_{\partial \Omega} \psi)|\nabla U|^{2}d \mathcal{H}^{n-1}(X),
\end{split}
\end{equation}
where  $\dot{U}$ satisfies the following equation
\begin{equation*}
\label{Ut}
\left\{
\begin{array}{lr}
\Delta \dot{U}=0, & X\in \Omega. \\
\dot{U}(X)=|\nabla U(X)|\psi.  & X\in  \partial \Omega.
\end{array}\right.
\end{equation*}

\end{theo}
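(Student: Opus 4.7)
The plan is to derive \eqref{tor2} as the infinitesimal (second-order) manifestation of the Brunn-Minkowski inequality \eqref{TE} for torsional rigidity, in the spirit of Colesanti's argument \cite{CA08} for the classical (volume) case.

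\emph{Step 1 (Variation and reduction).} Given $\psi\in C^1(\partial\Omega)$, set $\phi = \psi\circ F \in C^1(\sn)$, where $F = \nu_\Omega^{-1}$ is the smooth inverse Gauss map (equal to $\nabla h_\Omega|_{\sn}$ by the $C^2_+$ hypothesis). For $|t|$ small, $h_t := h_\Omega + t\phi$ is still the support function of a convex body $\Omega_t$; since support functions add under Minkowski summation, $\Omega_{(t_1+t_2)/2} = \tfrac{1}{2}(\Omega_{t_1}+\Omega_{t_2})$, and \eqref{TE} therefore yields the midpoint (hence full) concavity of $f(t) := T(\Omega_t)^{1/(n+2)}$ near $t=0$. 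Consequently $(n+2)\,T(\Omega)\,T''(0) \le (n+1)\,T'(0)^2$, with $T(t) := T(\Omega_t)$. By the variational formula \eqref{capha*} (extended from support functions to arbitrary smooth $\phi$ via the standard decomposition as a difference of support functions) and the hypothesis \eqref{PE},
\[
T'(0) = \int_{\partial\Omega}\psi\,|\nabla U|^2\,d\mathcal{H}^{n-1} = 0,
\]
so the proof reduces to establishing $T''(0)\le 0$ and identifying this quantity with the difference between the right and left hand sides of \eqref{tor2}.

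\emph{Step 2 (Second-order shape derivative).} From the identity $T(\Omega_t) = 2\int_{\Omega_t}U_t\,dX$ (Green's identity with $-\Delta U_t = 2$), Reynolds' transport combined with $U_t|_{\partial\Omega_t}\equiv 0$ gives $T'(t) = 2\int_{\Omega_t}\dot U_t\,dX$, and one further differentiation, followed by Green applied once more using $\Delta\ddot U=0$, yields
\[
T''(0) = 2\int_{\partial\Omega}\psi^2|\nabla U|\,d\mathcal{H}^{n-1} + \int_{\partial\Omega}\ddot U\,|\nabla U|\,d\mathcal{H}^{n-1}.
\]
To compute $\ddot U$ on $\partial\Omega$, use the Lagrangian parametrization $X(t) = F(\xi)+t\nabla\phi(\xi)\in\partial\Omega_t$, for which $\ddot X(0) = 0$. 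Differentiating $U_t(X(t))\equiv 0$ twice in $t$ gives
\[
\ddot U\big|_{\partial\Omega} = -2\,\nabla\dot U\cdot\dot X(0) - (D^2U\,\dot X(0),\,\dot X(0)).
\]
Decompose $\dot X(0) = \nabla\phi(\xi) = \psi\,\nu + W$ with $W = \nabla_{\sn}\phi \in T_X\partial\Omega$; the Weingarten relation gives $\nabla_{\partial\Omega}\psi = {\rm II}_{\partial\Omega}W$, equivalently $W = {\rm II}_{\partial\Omega}^{-1}\nabla_{\partial\Omega}\psi$. Expand $(D^2U\,\dot X(0),\dot X(0))$ using the following $C^2_+$ boundary identities (valid on $\partial\Omega$):
\[
(D^2U\,v,v) = -|\nabla U|\,{\rm II}_{\partial\Omega}(v,v),\ (D^2U\,\nu,v) = -\nabla_{\partial\Omega}|\nabla U|\cdot v\ (\text{for tangent }v),\ (D^2U\,\nu,\nu) = -2 + |\nabla U|\,{\rm tr}({\rm II}_{\partial\Omega}),
\]
derived respectively from $U|_{\partial\Omega}=0$ and from $\Delta U = -2$.

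\emph{Step 3 (Assembly).} Tangentially differentiating $\dot U|_{\partial\Omega} = \psi|\nabla U|$ along $W$ gives $\nabla\dot U\cdot W = |\nabla U|\,\nabla_{\partial\Omega}\psi\cdot W + \psi\,\nabla_{\partial\Omega}|\nabla U|\cdot W$. The $\psi\,\nabla_{\partial\Omega}|\nabla U|\cdot W$ contributions coming from this relation and from the mixed Hessian term $-2\psi\,(D^2U\,\nu,W)$ cancel exactly, while the remaining tangential piece collapses through ${\rm II}_{\partial\Omega}(W,W) = ({\rm II}_{\partial\Omega}^{-1}\nabla_{\partial\Omega}\psi,\nabla_{\partial\Omega}\psi) = \nabla_{\partial\Omega}\psi\cdot W$. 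Collecting all pieces,
\begin{align*}
T''(0) &= 4\int_{\partial\Omega}\psi^2|\nabla U|\,d\mathcal{H}^{n-1} - 2\int_{\partial\Omega}\psi(\nabla\dot U\cdot\nu)|\nabla U|\,d\mathcal{H}^{n-1} \\
&\quad - \int_{\partial\Omega}{\rm tr}({\rm II}_{\partial\Omega})\psi^2|\nabla U|^2\,d\mathcal{H}^{n-1} - \int_{\partial\Omega}({\rm II}_{\partial\Omega}^{-1}\nabla_{\partial\Omega}\psi,\nabla_{\partial\Omega}\psi)|\nabla U|^2\,d\mathcal{H}^{n-1},
\end{align*}
and the concavity inequality $T''(0)\le 0$ is then precisely \eqref{tor2}.

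The main obstacle is the bookkeeping in Step 2: correctly decomposing $(D^2U\,\dot X(0),\dot X(0))$ into its normal-normal, normal-tangential, and tangential-tangential parts via the $C^2_+$ boundary identities, and reorganizing the resulting tangential terms through the Weingarten relation $W = {\rm II}_{\partial\Omega}^{-1}\nabla_{\partial\Omega}\psi$ so that the ${\rm II}_{\partial\Omega}^{-1}$-weighted Dirichlet integral emerges, after the cancellation of the $\nabla_{\partial\Omega}|\nabla U|$ terms, with exactly the sign and coefficient needed to produce \eqref{tor2}.
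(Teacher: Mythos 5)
Your proof is correct, but it follows a genuinely different route from the one in the paper. Both arguments reduce the theorem to showing $T''(0)\le 0$ under the orthogonality hypothesis, using the concavity of $T^{1/(n+2)}$ (Lemma~\ref{P}), and both ultimately rest on the same boundary Hessian identities for $U$ (which the paper records, in the spherical cofactor-matrix formalism of Jerison~\cite{J96}, as Lemma~\ref{UPO}: writing $\kappa c_{ij}={\rm II}_{ij}$, items $(i)$--$(iii)$ there become exactly your three $C^2_+$ identities for $D^2U$ restricted to $\partial\Omega$). Where you diverge is in how $T''(0)$ is produced. The paper pulls everything back to $\sn$ via the Gauss map, writes $T(h_t)=\frac{1}{n+2}\int_\sn h_t\,G(h_t)\,d\xi$ with $G(h_t)=|\nabla U(F(\xi,t),t)|^2\det((h_t)_{ij}+h_t\delta_{ij})$, computes $\frac{d}{dt}G(h_t)=\mathcal L^t\phi$ explicitly, proves $\mathcal L^t$ is self-adjoint on $L^2(\sn)$ together with the homogeneity relation $\mathcal L^t h_t=(n+1)G(h_t)$ to get $f'(t)=\int_\sn\phi\,G(h_t)\,d\xi$, and then $f''(0)=\int_\sn\phi\,\mathcal L^0\phi\,d\xi$ -- this needs only $\dot U$, never $\ddot U$. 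You instead compute $T''(0)$ directly on $\partial\Omega$ via the representation $T(\Omega_t)=2\int_{\Omega_t}U_t\,dX$, Reynolds transport (killing the first boundary term since $U_t\equiv0$ on $\partial\Omega_t$), Green's identity to convert $2\int_\Omega\ddot U\,dX$ to $\int_{\partial\Omega}\ddot U\,|\nabla U|\,d\mathcal H^{n-1}$, and the linear Lagrangian parametrization $X(t)=F(\xi)+t\nabla\phi(\xi)$ to read off $\ddot U|_{\partial\Omega}$. Your tangential/normal decomposition $\dot X(0)=\psi\nu+W$ with $W={\rm II}_{\partial\Omega}^{-1}\nabla_{\partial\Omega}\psi$, the cancellation of the $\psi\,\nabla_{\partial\Omega}|\nabla U|\cdot W$ terms, and the collapse $-2|\nabla U|\,\nabla_{\partial\Omega}\psi\cdot W+|\nabla U|\,{\rm II}(W,W)=-|\nabla U|\,{\rm II}^{-1}\nabla_{\partial\Omega}\psi\cdot\nabla_{\partial\Omega}\psi$ all check out, and they reproduce exactly the four terms of \eqref{tor2} with the correct signs. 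What each approach buys: the paper's is tailored to the spherical change of variables it then uses to pass from Theorem~\ref{main2} back to Theorem~\ref{main}, and avoids any appeal to second-order $t$-differentiability of $U_t$; yours is more intrinsic to $\partial\Omega$ (no cofactor bookkeeping, no explicit verification of self-adjointness) but does quietly invoke existence and regularity of $\ddot U_t$, which for $C^2_+$ domains and smooth variations follows from standard elliptic estimates but is worth a sentence of justification, as is the approximation step reducing $\psi\in C^1$ to the smooth case.
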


 It is worth mentioning that Colesanti  \cite{CA08} has used the classical Brunn-Minkowski inequality for volume to derive a class of Poincar\'{e} inequalities for surface area measure. Theorem \ref{main} looks similar to Colesanti's inequality,  but the former requires a more complicated proof for which highly depends on the property of solution of Dirichlet problem \eqref{capequ*}. Theorem \ref{main} may be a new contribution to potential theory, analysis, fully nonlinear equations and geometric inequality. To a certain extent, it shows us that how these fields can be used together.

\section{Preliminaries}
\label{Sec2}
In this section, we list some basics on convex bodies and torsional rigidity.
\subsection{Basics of convex bodies}
There are many standard references with regard to convex bodies, for instance, please refer to Gardner \cite{G06} and Schneider \cite{S14}.

 Denote by ${\rnnn}$ the $n$-dimensional Euclidean space. For $Y, Z\in {\rnnn}$, $ Y\cdot Z $ denotes the standard inner product. For $X\in{\rnnn}$, $|X|=\sqrt{ X\cdot X}$ is the Euclidean norm. Let ${\sn}$ be the unit sphere, and $C({\sn})$ be the set of continuous functions defined on the unit sphere ${\sn}$. A compact convex set of ${\rnnn}$ with non-empty interior is called as a convex body.

For a compact convex set $\Omega$ in ${\rnnn}$, the diameter of $\Omega$ is denoted by
\[
{\rm diam}(\Omega)=\max\{|X-Y|:X,Y \in \Omega\}.
\]

If $\Omega$ is a convex body containing the origin in ${\rnnn}$, for $\xi\in{\sn}$, the support function of $\Omega$ (with respect to the origin) is defined by
\[
h(\Omega,\xi)=\max\{ \xi\cdot Y:Y \in \Omega\}.
\]
Extending this definition to a homogeneous function of degree one in $\rnnn\backslash \{0\}$ by the equation $h(\Omega, \xi)=|\xi|h\left(\Omega, \frac{\xi}{|\xi|}\right)$.

 The map $\nu_{\Omega}:\partial \Omega\rightarrow {\sn}$ denotes the Gauss map of $\partial\Omega$. Meanwhile, for $\omega\subset {\sn}$, the inverse of Gauss map $\nu_{\Omega}$ is expressed as
\begin{equation*}
\nu^{-1}_{\Omega}(\omega)=\{X\in \partial \Omega:  \nu_{\Omega}(X) {\rm \ is \ defined \ and }\ \nu_{\Omega}(X)\in \omega\}.
\end{equation*}
For simplicity in the subsequence, we abbreviate $\nu^{-1}_{\Omega}$ as $F$. In particular, for a convex body $\Omega$ being of class $C^{2}_{+}$, i.e., its boundary is of class $C^{2}$ and of positive Gauss curvature, the support function of $\Omega$ can be written as
\begin{equation}\label{hhom}
h(\Omega,\xi)=\xi\cdot F(\xi)=\nu_{\Omega}(X)\cdot X, \ {\rm where} \ \xi\in {\sn}, \ \nu_{\Omega}(X)=\xi \ {\rm and} \ X\in \partial \Omega.
\end{equation}
 Let $\{e_{1},e_{2},\ldots, e_{n-1}\}$ be a local orthonormal frame on ${\sn}$, $h_{i}$ be the first order covariant derivatives of $h(\Omega,\cdot)$  with respect to a local orthonormal frame on ${\sn}$. Differentiating \eqref{hhom} with respect to $e_{i}$ , we get
\[
h_{i}=e_{i}\cdot F(\xi)+\xi\cdot F_{i}(\xi).
\]
Since $F_{i}$ is tangent to $ \partial \Omega$ at $F(x)$, we obtain
\begin{equation}\label{Fi}
h_{i}=e_{i}\cdot F(\xi).
\end{equation}
Combining \eqref{hhom} and \eqref{Fi}, we have (see also \cite[p. 97]{U91})
\begin{equation}\label{Fdef}
F(\xi)=\sum_{i} h_{i}(\Omega,\xi)e_{i}+h(\Omega,\xi)\xi=\nabla_{\sn}h(\Omega,\xi)+h(\Omega,\xi)\xi.
\end{equation}
Here $\nabla_{\sn}$ is the spherical gradient. On the other hand, since we can extend $h(\Omega,\cdot)$ to $\rnnn$ as a 1-homogeneous function $h(\Omega, \cdot)$, then restrict the gradient of $h(\Omega,\cdot)$ on $\sn$, it yields that (see for example, \cite{CY76})
\begin{equation}\label{hf}
\nabla h(\Omega,\xi)=F(\xi), \ \forall \xi\in{\sn},
\end{equation}
where $\nabla$ is the gradient operator in $\rnnn$. Let $h_{ij}$ be the second order covariant derivatives of $h(\Omega,\cdot)$ with respect to a local orthonormal frame on ${\sn}$. Then, applying \eqref{Fdef} and \eqref{hf}, we have (see, e.g., \cite[p. 382]{J91})
\begin{equation}\label{hgra}
\nabla h(\Omega,\xi)=\sum_{i}h_{i}e_{i}+h\xi, \quad F_{i}(\xi)=\sum_{j}(h_{ij}+h\delta_{ij})e_{j}.
\end{equation}
Also, the {\rm reverse Weingarten map} of $\Omega$ at $\xi$ is given by
\[
(h_{ij}+h\delta_{ij})=D(\nu^{-1}_{\Omega}) \quad {\rm on} \ \sn.
\]

The second fundamental form ${\rm II}_{\partial \Omega}$ at $X\in \partial \Omega$ is defined as ${\rm II}_{X}(Y,Z)=\nabla_{Y}\nu\cdot Z$ with $Y,Z\in T_{X}\partial \Omega$.

From the definition of Weingarten map, one see that the second fundamental form
of $\partial \Omega$ is equivalent to Weingarten map, i.e.,
\[
{\rm II}_{\partial \Omega}=D \nu_{\Omega}.
\]

The Gauss curvature of $\partial\Omega$, $\kappa$, i.e., the Jacobian determinant of Gauss map of $\partial\Omega$, is revealed as
\begin{equation*}
\kappa=\frac{1}{\det(h_{ij}+h\delta_{ij})}.
\end{equation*}

\subsection{Basics of torsional rigidity} For reader's convenience, we collect some facts about the properties of torsional rigidity.

\subsubsection{The  properties of torsional rigidity}

 Now, according to \cite{CF10, D77}, we first list some properties corresponding to the solution $U$ of \eqref{capequ*}, as follows.

For any $ X\in\partial \Omega$, $0<b<1$, the non-tangential cone is defined as
\begin{equation*}\label{capm2}
\Gamma(X)=\left\{Y\in \Omega:dist(Y,\partial \Omega)> b|X-Y|\right\}.
\end{equation*}

\begin{lem}\label{Nonfin}
Let $\Omega$ be a convex body in ${\rnnn}$ and $U$ be the solution of \eqref{capequ*} in $\Omega$. Then the non-tangential limit
\begin{equation*}\label{capm3}
\nabla U(X)=\lim_{Y\rightarrow X, \ Y \in \Gamma(X)}\nabla U(Y)
\end{equation*}
exists for $\mathcal{H}^{n-1}$ almost all $X\in\partial \Omega$. Furthermore, for $\mathcal{H}^{n-1}$ almost all $X\in \partial \Omega$,
\begin{equation*}
\nabla U(X)=-|\nabla U(X)|\nu_{\Omega}(X)\quad{\rm and}\quad |\nabla U|\in L^{2}(\partial \Omega,  \mathcal{H}^{n-1}).
\end{equation*}
\end{lem}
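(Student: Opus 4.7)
The strategy is to reduce the inhomogeneous torsion equation to a harmonic Dirichlet problem, then invoke Dahlberg's regularity results for harmonic functions on Lipschitz domains. Fix any smooth particular solution of $\Delta V_0 = -2$ on $\rnnn$, for instance $V_0(X)=-|X|^{2}/n$, and write $U=V_0+W$. Then $W$ is harmonic in $\Omega$ and satisfies the Dirichlet condition $W=|X|^{2}/n$ on $\partial\Omega$, which is a smooth (hence $W^{1,2}(\partial\Omega)$) function. Since every convex body is a Lipschitz domain, this puts $W$ in the setting of a Dirichlet problem for Laplace's equation in a Lipschitz domain with Sobolev boundary data.

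I would then cite Dahlberg's estimates \cite{D77} for the regularity problem: under these hypotheses, the non-tangential maximal function of $\nabla W$, taken over the cones $\Gamma(X)$ defined above, belongs to $L^{2}(\partial\Omega,\mathcal{H}^{n-1})$, and the non-tangential limit $\nabla W(X)=\lim_{Y\to X,\,Y\in\Gamma(X)}\nabla W(Y)$ exists for $\mathcal{H}^{n-1}$-a.e.\ $X\in\partial\Omega$. Adding the continuous gradient $\nabla V_0(X)=-2X/n$, the same properties transfer verbatim to $\nabla U$; in particular, $|\nabla U|\in L^{2}(\partial\Omega,\mathcal{H}^{n-1})$, and the non-tangential limit $\nabla U(X)$ exists $\mathcal{H}^{n-1}$-a.e.

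To pin down the direction of the boundary gradient, I would combine two elementary ingredients. First, the boundary of a convex body admits a tangent hyperplane at $\mathcal{H}^{n-1}$-a.e.\ point, and $U$ vanishes identically on $\partial\Omega$; hence the tangential component of any non-tangential limit of $\nabla U$ at such a point is forced to vanish, so $\nabla U(X)$ is parallel to $\nu_{\Omega}(X)$. Second, the strong maximum principle applied to $\Delta U=-2<0$ with $U|_{\partial\Omega}=0$ yields $U>0$ in $\Omega$, so the directional derivative of $U$ in the inward normal direction $-\nu_{\Omega}(X)$ is nonnegative, i.e.\ $\nabla U(X)\cdot\nu_{\Omega}(X)\le 0$. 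Together these give $\nabla U(X)=-|\nabla U(X)|\,\nu_{\Omega}(X)$ for $\mathcal{H}^{n-1}$-a.e.\ $X\in\partial\Omega$.

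The main obstacle is the appeal to Dahlberg's theorem: the non-tangential $L^{2}$ control of the gradient of a harmonic function in a Lipschitz (and in particular non-$C^{2}$) domain is a deep fact whose full proof is well outside the scope of this paper. Once that external input is in hand, the remaining steps, namely subtracting the quadratic particular solution, using that convex boundaries are tangentially differentiable almost everywhere together with $U|_{\partial\Omega}\equiv 0$, and applying the maximum principle, are all routine.
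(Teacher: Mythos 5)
The paper does not actually prove this lemma; it is quoted as a known fact with a pointer to \cite{CF10} and \cite{D77}, so the right comparison is with the argument in those references, and your sketch follows essentially that route: subtract the explicit particular solution $-|X|^{2}/n$ to reduce to a harmonic function $W$ in a convex (hence Lipschitz) domain, invoke Dahlberg-type non-tangential theory for $W$, and then recover the direction $\nabla U=-|\nabla U|\nu_{\Omega}$ from the a.e.\ differentiability of a convex boundary, $U\equiv 0$ on $\partial\Omega$, and $U>0$ inside (maximum principle). That part of your write-up is fine.

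The one point to repair is the attribution of the hard step. What you need for $W$ is solvability of the $L^{2}$ \emph{regularity} problem (non-tangential maximal function of $\nabla W$ in $L^{2}$ and a.e.\ non-tangential limits of the gradient, for Sobolev/Lipschitz boundary data); this is \emph{not} in \cite{D77}, which concerns estimates of harmonic measure and the Dirichlet problem with $L^{2}$ data. The regularity problem on Lipschitz domains is due to later work (Jerison--Kenig, Verchota, Dahlberg--Kenig), so as written your citation does not cover the claim. There are two clean fixes. Either cite those later results explicitly, or, closer in spirit to \cite{CF10} and using only what \cite{D77} really provides: by Lemma \ref{argu} one has $|\nabla U|\le\operatorname{diam}(\Omega)$ in $\Omega$, hence each component of $\nabla W$ is a \emph{bounded} harmonic function; the Fatou-type theorem of Hunt--Wheeden gives non-tangential limits a.e.\ with respect to harmonic measure, and Dahlberg's theorem \cite{D77} (mutual absolute continuity of harmonic measure and surface measure on Lipschitz domains) upgrades this to $\mathcal{H}^{n-1}$-a.e.\ convergence; the membership $|\nabla U|\in L^{2}(\partial\Omega,\mathcal{H}^{n-1})$ is then immediate from the uniform bound, with no need for the regularity-problem machinery at all. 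With that substitution your argument is complete and consistent with the sources the paper relies on.
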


\begin{lem}\label{argu}Let $\Omega$ be a convex body in ${\rnnn}$ and $U$ be the solution of \eqref{capequ*} in $\Omega$. Then
\begin{equation*}\label{DM}
|\nabla U(X)|\leq {\rm diam}(\Omega),\ \forall X\in  \Omega.
\end{equation*}
\end{lem}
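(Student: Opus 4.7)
The statement to establish is the pointwise gradient bound $|\nabla U(X)|\le\mathrm{diam}(\Omega)$ for the torsion function $U$ solving \eqref{capequ*}. The plan is to reduce the interior estimate to a boundary estimate via the maximum principle, then dominate $|\nabla U|$ on $\partial\Omega$ by comparison with the torsion function of an explicit slab of width $\mathrm{diam}(\Omega)$ that contains $\Omega$.

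First I would handle the interior-to-boundary reduction. Since $\Delta U=-2$ is constant, differentiation gives that each partial $\partial_i U$ is harmonic in $\Omega$, whence
\[
\Delta(|\nabla U|^{2})=2\,|D^{2}U|^{2}\;\ge\;0,
\]
so $|\nabla U|^{2}$ is subharmonic in $\Omega$. By Lemma \ref{Nonfin}, $|\nabla U|$ has non-tangential boundary values and lies in $L^{2}(\partial\Omega,\mathcal{H}^{n-1})$, so the standard maximum principle for subharmonic functions with non-tangential boundary data (or, equivalently, approximation by a sequence of smooth convex bodies) yields
\[
\sup_{X\in\Omega}|\nabla U(X)|\;\le\;\operatorname*{ess\,sup}_{Y\in\partial\Omega}|\nabla U(Y)|.
\]
It therefore suffices to bound $|\nabla U(Y_{0})|$ at an arbitrary boundary point $Y_{0}$ where the outer unit normal $\nu=\nu_{\Omega}(Y_{0})$ exists and $\nabla U(Y_{0})=-|\nabla U(Y_{0})|\nu$ (Lemma \ref{Nonfin}).

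Next I construct a slab barrier. Set $d=\mathrm{diam}(\Omega)$ and $s(X)=-(X-Y_{0})\cdot\nu$. Because $\nu$ is a supporting outer normal, $s\ge 0$ on $\Omega$; because $\mathrm{diam}(\Omega)=d$, also $s\le d$ on $\Omega$. Define the slab torsion function
\[
V(X)=s(X)\bigl(d-s(X)\bigr).
\]
A direct computation using $|\nabla s|=1$ and $\Delta s=0$ gives $\Delta V=-2$ in $\rnnn$ and $V\ge 0$ on the slab $\{0\le s\le d\}\supset\Omega$. In particular $V\ge 0=U$ on $\partial\Omega$, and $\Delta(V-U)=0$ in $\Omega$, so the maximum principle yields $V\ge U$ throughout $\Omega$.

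Finally I apply a Hopf-type comparison at $Y_{0}$. Since $V(Y_{0})=U(Y_{0})=0$ and $V-U\ge 0$ near $Y_{0}$, the inward normal derivative satisfies
\[
\partial_{-\nu}(V-U)(Y_{0})\;\ge\;0.
\]
Computing the two contributions, $\nabla V(Y_{0})=V'(0)\nabla s(Y_{0})=-d\,\nu$, so $\partial_{-\nu}V(Y_{0})=d$, while $\partial_{-\nu}U(Y_{0})=-\nabla U(Y_{0})\cdot\nu=|\nabla U(Y_{0})|$. The inequality above becomes $d-|\nabla U(Y_{0})|\ge 0$, i.e.\ $|\nabla U(Y_{0})|\le d$. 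Combined with the subharmonic reduction this proves $|\nabla U(X)|\le\mathrm{diam}(\Omega)$ for all $X\in\Omega$.

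The main obstacle is not the geometric comparison (which is elementary once the slab barrier is set up), but rather the justification of the maximum principle and the Hopf-type inequality at boundary points of a convex body that is only assumed to be convex (not $C^{2}_{+}$). I would address this by working at boundary points where the normal and the non-tangential gradient both exist, which is an $\mathcal{H}^{n-1}$-full subset of $\partial\Omega$ by Lemma \ref{Nonfin} and convexity; alternatively, approximate $\Omega$ from the inside by smooth convex bodies $\Omega_{k}\uparrow\Omega$, apply the argument above on each $\Omega_{k}$ (where everything is classical), and pass to the limit using continuity of torsion functions under Hausdorff convergence of convex bodies.
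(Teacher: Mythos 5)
Your argument is correct and is essentially the classical proof of this bound: the paper itself does not prove Lemma \ref{argu} but merely quotes it from Colesanti--Fimiani \cite{CF10}, and the proof there is exactly your combination of (i) subharmonicity of $|\nabla U|^{2}$ (valid because $\Delta U$ is constant, so $\Delta|\nabla U|^{2}=2|D^{2}U|^{2}\ge 0$) and (ii) the slab barrier $V=s(d-s)$, $s(X)=-(X-Y_{0})\cdot\nu$, which satisfies $\Delta V=-2$, dominates $U$ by the comparison principle, and yields $|\nabla U(Y_{0})|=\partial_{-\nu}U(Y_{0})\le \partial_{-\nu}V(Y_{0})=d$ at boundary points. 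The only step I would not let stand as written is the appeal to ``the standard maximum principle for subharmonic functions with non-tangential boundary data'' on a general convex body: $|\nabla U|^{2}$ is not known a priori to be bounded or continuous up to $\partial\Omega$, and an a.e.\ non-tangential boundary bound does not by itself control $\sup_{\Omega}|\nabla U|$ without some Fatou-type machinery. The clean fix is the alternative you already propose: run the whole argument on smooth convex bodies $\Omega_{k}$ exhausting $\Omega$, where $U_{k}\in C^{1}(\overline{\Omega_{k}})$ makes both the maximum principle and the Hopf-type comparison classical, note $\mathrm{diam}(\Omega_{k})\le \mathrm{diam}(\Omega)$, and pass to the limit using local uniform convergence of $\nabla U_{k}$ (interior elliptic estimates plus stability of torsion functions under Hausdorff convergence). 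With that route made primary rather than a fallback, the proof is complete and matches the cited one.
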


\begin{lem}[\cite{BC85}]\label{TBM} If $\Omega$ and $\widetilde{\Omega}$ are convex bodies in $\rnnn$ and $t\in [0,1]$, then
\begin{equation}\label{ji}
T((1-t)\Omega+t\widetilde{\Omega})^{1/(n+2)}\geq (1-t)T(\Omega)^{1/(n+2)}+tT(\widetilde{\Omega})^{(1/(n+2)}.
\end{equation}
Colesanti \cite{CA05} proved that equality occurs in \eqref{ji} if and only if $\Omega$ is homothetic to $\widetilde{\Omega}$.

\end{lem}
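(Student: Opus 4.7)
The plan is to extract \eqref{tor2} as the second-order infinitesimal information contained in Lemma \ref{TBM}, evaluated along a one-parameter family adapted to $\psi$. Given $\psi\in C^{1}(\partial\Omega)$, I set $\phi:=\psi\circ F\in C^{1}(\sn)$ and consider, for small $|t|$, the family $\Omega_{t}$ with support function $h_{t}:=h(\Omega,\cdot)+t\phi$. Because $\Omega$ is $C^{2}_{+}$, the reverse Weingarten matrix $A_{ij}:=h_{ij}+h\delta_{ij}$ is positive definite, so $\Omega_{t}$ remains $C^{2}_{+}$ for $|t|$ small (after rescaling $\phi$ so that $\Omega_{t}$ is a genuine convex Minkowski combination, approximating $C^{1}$ data by $C^{2}$ data if necessary). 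Lemma \ref{TBM} then gives concavity of $f(t):=T(\Omega_{t})^{1/(n+2)}$; after clearing factors this reads $T''(0)\,T(0)\le \tfrac{n+1}{n+2}T'(0)^{2}$. The first-variation formula \eqref{capha*} identifies $T'(0)=\int_{\partial\Omega}\psi|\nabla U|^{2}d\mathcal{H}^{n-1}$, which vanishes by \eqref{PE}. Hence $T''(0)\le 0$, and the entire remaining task is to compute $T''(0)$ and identify it with the left-hand side of \eqref{tor2} minus its right-hand side.

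The computation begins with
\begin{equation*}
T'(t)=\int_{\sn}\phi(\xi)\,|\nabla U_{t}(F_{t}(\xi))|^{2}\det\bigl(h^{t}_{ij}+h^{t}\delta_{ij}\bigr)\,d\sigma(\xi),
\end{equation*}
differentiated at $t=0$. Two factors vary: the weight $g(t):=|\nabla U_{t}(F_{t}(\xi))|^{2}$ and the Jacobian $J(t):=\det(h^{t}_{ij}+h^{t}\delta_{ij})$. Writing $J'(0)=J_{0}(A^{-1})^{ij}(\phi_{ij}+\phi\delta_{ij})$, the divergence-free property of the cofactor matrix $C^{ij}:=J_{0}(A^{-1})^{ij}$ on $\sn$ lets integration by parts turn $\int_{\sn}\phi g\,C^{ij}\phi_{ij}\,d\sigma$ into $-\int_{\sn}g\,C^{ij}\phi_{i}\phi_{j}\,d\sigma-\int_{\sn}\phi\,g_{i}\,C^{ij}\phi_{j}\,d\sigma$, while the $\phi\delta_{ij}$ part yields $\int_{\sn}\phi^{2}g\,J_{0}\,{\rm tr}(A^{-1})\,d\sigma$. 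For $g'(0)$, the chain rule with $\dot F_{0}=\nabla_{\sn}\phi+\phi\xi$ (from \eqref{Fdef}) and $\nabla U=-|\nabla U|\nu$ on $\partial\Omega$ (Lemma \ref{Nonfin}) gives
\begin{equation*}
g'(0)=-2|\nabla U|(\nabla\dot U\cdot\nu)-2|\nabla U|\,D^{2}U(\nu,\nabla_{\sn}\phi)-2|\nabla U|\phi\,D^{2}U(\nu,\nu).
\end{equation*}
The normal-normal piece is handled by decomposing $\Delta U=-2$ at the boundary: $U|_{\partial\Omega}=0$ kills the tangential Laplacian, yielding $D^{2}U(\nu,\nu)=-2+{\rm tr}({\rm II}_{\partial\Omega})|\nabla U|$, which accounts for both the $+4|\nabla U|\phi$ and the $-2{\rm tr}({\rm II})|\nabla U|^{2}\phi$ pieces needed for \eqref{tor2}.

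Pulling back via $J_{0}\,d\sigma=d\mathcal{H}^{n-1}$, $A^{-1}={\rm II}_{\partial\Omega}$, and $\nabla_{\sn}\phi={\rm II}^{-1}\nabla_{\partial\Omega}\psi$: the ${\rm tr}({\rm II})|\nabla U|^{2}\psi^{2}$ contributions from $g'$ and $J'$, with coefficients $-2$ and $+1$, combine to $-\int{\rm tr}({\rm II})\psi^{2}|\nabla U|^{2}d\mathcal{H}^{n-1}$; the $-2\psi|\nabla U|(\nabla\dot U\cdot\nu)$ and $+4\psi^{2}|\nabla U|$ terms appear directly; and $-\int_{\sn}g\,C^{ij}\phi_{i}\phi_{j}\,d\sigma$ transforms into $-\int({\rm II}^{-1}\nabla_{\partial\Omega}\psi\cdot\nabla_{\partial\Omega}\psi)|\nabla U|^{2}d\mathcal{H}^{n-1}$. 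The main obstacle, and the last delicate step, is disposing of the two remaining cross terms $-\int_{\sn}\phi\,g_{i}\,C^{ij}\phi_{j}\,d\sigma$ and $-2\int_{\sn}\phi|\nabla U|\,D^{2}U(\nu,\nabla_{\sn}\phi)\,J_{0}\,d\sigma$. For these I would invoke the boundary identity $D^{2}U(\nu,\tau)=-\tau\cdot\nabla_{\partial\Omega}|\nabla U|$ for tangential $\tau$ (obtained by differentiating $\partial_{\nu}U=-|\nabla U|$ along $\partial\Omega$) together with $\nabla|\nabla U|^{2}=2|\nabla U|\nabla|\nabla U|$; both cross terms then reduce, with opposite signs, to $\int\psi\,{\rm II}^{-1}(\nabla_{\partial\Omega}\psi,\nabla_{\partial\Omega}|\nabla U|^{2})\,d\mathcal{H}^{n-1}$ and cancel, leaving $T''(0)$ equal to the LHS of \eqref{tor2} minus its RHS, as required.
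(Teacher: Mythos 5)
There is a fundamental mismatch: your proposal does not prove the statement in question. The statement is the Brunn--Minkowski inequality for torsional rigidity itself, \eqref{ji}, i.e.\ the $(1/(n+2))$-concavity of $T$ along Minkowski combinations (together with its equality case). Your argument instead \emph{assumes} this lemma --- the pivotal sentence is ``Lemma \ref{TBM} then gives concavity of $f(t):=T(\Omega_t)^{1/(n+2)}$'' --- and from it derives the Poincar\'e-type inequality \eqref{tor2}. That is precisely the content and strategy of the paper's Section 3 (Proposition \ref{fsd}, Theorem \ref{main2}, and the passage to Theorem \ref{main} via the Gauss map), not a proof of Lemma \ref{TBM}. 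As a proof of the lemma it is circular: the concavity you invoke at the outset is exactly what has to be established.

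For the record, the paper itself offers no proof of Lemma \ref{TBM}; it is quoted from Borell \cite{BC85}, with the equality case ($\Omega$ homothetic to $\widetilde{\Omega}$) due to Colesanti \cite{CA05}. A genuine proof would need entirely different tools from those in your sketch: for instance Borell's concavity results for Greenian potentials, or a Pr\'ekopa--Leindler/Borell--Brascamp--Lieb-type argument applied to the variational characterization \eqref{cdf1*} --- building an admissible test function on $(1-t)\Omega+t\widetilde{\Omega}$ from the torsion functions of $\Omega$ and $\widetilde{\Omega}$ --- combined with the homogeneity $T(a\Omega)=a^{n+2}T(\Omega)$ of Remark \ref{caphom} to upgrade the resulting estimate to $(1/(n+2))$-concavity; the equality analysis requires a further separate argument. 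None of these ingredients appears in your proposal, and no second-variation computation of the type you carry out can yield \eqref{ji}, since such a computation only extracts local (infinitesimal) consequences of a concavity that must already be known globally.
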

\begin{rem}\label{caphom}[The homogeneity of torsional rigidity]
If $U$ is the solution of \eqref{capequ*} in $\Omega$ and $a> 0$, then the function
\begin{equation*}\label{}
V(Y)=a^{2}U\left(\frac{Y}{a}\right), \quad Y\in a\Omega,
\end{equation*}
is the solution in $a\Omega=\left\{Y=aZ\big| \ Z\in \Omega\right\}$. It follows that the torsional rigidity is positively homogeneous of order $n+2$, i.e., $T(a\Omega)=a^{n+2}T(\Omega)$.

\end{rem}

 Let $\Omega$ be a convex body of class $C^{2}_{+}$, and $U$ be the solution of \eqref{capequ*} in $\Omega$. By a direct calculation in conjunction with Lemma \ref{Nonfin} and the divergence theorem, \cite{CA05} showed that
\begin{equation}\label{capdef2}
T(\Omega)=\frac{1}{n+2}\int_{\partial \Omega}|\nabla U(X)|^{2}(X\cdot \nu_{\Omega}(X)){d}{\mathcal{H}^{n-1}}(X).
\end{equation}
Since for every $g\in C({\sn})$,
\begin{equation*}\label{arsphere}
\int_{{\sn}}g(\xi){d}{S_{\Omega}(\xi)}=\int_{\partial \Omega}g(\nu_{\Omega}(X)){d}{\mathcal{H}^{n-1}}(X).
\end{equation*}
\eqref{capdef2} is equivalent to
\begin{align}\label{captatget2}
T(h)&=\frac{1}{n+2}\int_{{\sn}}|\nabla U(F(\xi))|^{2}h(\xi){d}{S_{\Omega}(\xi)}\notag\\
&=\frac{1}{n+2}\int_{{\sn}}|\nabla U(F(\xi))|^{2}h(\xi)\det(h_{ij}(\xi)+h(\xi)\delta_{ij}){d}{\xi}.
\end{align}
Let us define the set
\begin{equation*}\label{S}
\mathcal{S}=\{h\in C^{2}({\sn}):h_{ij}+h\delta_{ij}>0 \  {\rm on} \ \sn \}.
\end{equation*}
By using Lemma \ref{TBM} and \eqref{captatget2}, we get
\begin{lem}\label{P} $T^{1/(n+2)}$ is concave in $\mathcal{S}$, i.e.,
\begin{equation*}
T^{1/(n+2)}((1-t)h_{0}+th_{1})\geq (1-t)T^{1/(n+2)}(h_{0})+tT^{1/(n+2)}(h_{1}), \ \forall h_{0}, h_{1}\in \mathcal{S}, \ \forall t\in [0,1].
\end{equation*}
\end{lem}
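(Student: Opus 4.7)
The plan is to reduce the concavity of $T^{1/(n+2)}$ on $\mathcal{S}$ to Lemma~\ref{TBM} via the standard bijection between $\mathcal{S}$ and the family of convex bodies of class $C^{2}_{+}$, established through the support function.

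First I would observe that each $h\in\mathcal{S}$ is the support function of a unique convex body $\Omega_{h}\subset\rnnn$ of class $C^{2}_{+}$. The condition $(h_{ij}+h\delta_{ij})>0$ on $\sn$ is precisely the condition that the $1$-homogeneous extension of $h$ to $\rnnn$ is convex (its radial null direction is harmless, and its tangential Hessian is exactly the matrix $(h_{ij}+h\delta_{ij})$) and that $\partial\Omega_{h}$ has positive Gauss curvature; bijectivity is manifest through the reverse Weingarten identity $(h_{ij}+h\delta_{ij})=D(\nu^{-1}_{\Omega_{h}})$ recalled in Section~\ref{Sec2}. Under this identification, the intrinsic functional $T(h)$ defined by \eqref{captatget2} equals the torsional rigidity $T(\Omega_{h})$, because \eqref{captatget2} is simply \eqref{capdef2} pushed forward to $\sn$ by the change of variables $X=F(\xi)$ with Jacobian $\det(h_{ij}+h\delta_{ij})$.

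Next I would invoke Minkowski linearity of support functions: for $h_{0},h_{1}\in\mathcal{S}$ and $t\in[0,1]$, the combination $h_{t}:=(1-t)h_{0}+th_{1}$ again lies in $\mathcal{S}$, since $h\mapsto(h_{ij}+h\delta_{ij})$ is linear and the positive-definite cone is convex, and $h_{t}$ is the support function of $\Omega_{t}:=(1-t)\Omega_{h_{0}}+t\Omega_{h_{1}}$. Applying Lemma~\ref{TBM} to the pair $\Omega_{h_{0}},\Omega_{h_{1}}$ then yields
\begin{equation*}
T(h_{t})^{1/(n+2)}=T(\Omega_{t})^{1/(n+2)}\geq(1-t)T(\Omega_{h_{0}})^{1/(n+2)}+tT(\Omega_{h_{1}})^{1/(n+2)}=(1-t)T(h_{0})^{1/(n+2)}+tT(h_{1})^{1/(n+2)},
\end{equation*}
which is the asserted concavity.

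I do not anticipate any serious obstacle: the statement is essentially a reformulation of Lemma~\ref{TBM} in support-function coordinates, and both the correspondence $h\leftrightarrow\Omega_{h}$ and the identity $T(h)=T(\Omega_{h})$ on $\mathcal{S}$ have already been prepared in the Preliminaries. The only point requiring care is to confirm that $\mathcal{S}$ is stable under convex combinations so that $h_{t}$ again parametrizes a genuine $C^{2}_{+}$ convex body; this is immediate from the linearity of $h\mapsto(h_{ij}+h\delta_{ij})$ and the convexity of the positive cone, so the main step is truly just the invocation of Lemma~\ref{TBM}.
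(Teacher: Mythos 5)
Your proposal is correct and follows essentially the same route as the paper, which derives the lemma directly from Lemma \ref{TBM} together with the support-function representation \eqref{captatget2}, using that $(1-t)h_{0}+th_{1}$ is the support function of the Minkowski combination $(1-t)\Omega_{h_0}+t\Omega_{h_1}$. Your added verification that $\mathcal{S}$ is closed under convex combinations is a reasonable detail the paper leaves implicit.
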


\section{Proof of Theorem \ref{main} }
\label{Sec3}
In this section, our aim is to prove Theorem \ref{main}. We start with getting the following result.
\begin{theo}\label{main2}
Let $h\in \mathcal{S}$, $\phi\in C^{1}(\sn)$. If

\begin{equation}\label{T31}
\int_{\sn} \phi |\nabla U(F(\xi))|^{2}\det(h_{ij}+h\delta_{ij})d \xi=0,
\end{equation}
then
\begin{equation}
\begin{split}
\label{T32}
&-\int_{\sn}{\rm tr}(c_{ij})|\nabla U(F(\xi))|^{2}\phi^{2}d \xi-2\int_{\sn}\phi\frac{|\nabla U(F(\xi))|^{2}}{\kappa}\nabla \dot{U}(F(\xi))\cdot \xi d \xi \\
&\quad +4\int_{\sn}\phi^{2}\frac{|\nabla U(F(\xi))|}{\kappa}d\xi \leq \int_{\sn} \sum_{i,j}c_{ij}|\nabla U(F(\xi))|^{2}\phi_{i}\phi_{j}d \xi,
\end{split}
\end{equation}
where $c_{ij}$ is the cofactor matrix of $(h_{ij}+h\delta_{ij})$.
\end{theo}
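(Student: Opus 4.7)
The plan is to apply the concavity of $T^{1/(n+2)}$ from Lemma~\ref{P} to the curve $t\mapsto h_t := h+t\phi$, which remains in the open set $\mathcal{S}$ for $|t|$ small. Concavity of $\Psi(t):=T(h_t)^{1/(n+2)}$ gives $\Psi''(0)\le 0$; by the chain rule,
\[
\Psi''(0)=\tfrac{1}{n+2}T(h)^{1/(n+2)-1}\Bigl(T''(h)[\phi,\phi]-\tfrac{n+1}{n+2}\tfrac{(T'(h)[\phi])^2}{T(h)}\Bigr),
\]
so whenever $T'(h)[\phi]=0$ the condition $\Psi''(0)\le 0$ reduces to $T''(h)[\phi,\phi]\le 0$. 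Translating the Hadamard-type variational formula \eqref{capha*} from $\partial\Omega$ to $\sn$ via \eqref{captatget2} gives
\[
T'(h)[\phi]=\int_\sn \phi\,|\nabla U(F(\xi))|^2\det(h_{ij}+h\delta_{ij})\,d\xi,
\]
which is the left-hand side of \eqref{T31}; hence \eqref{T31} delivers $T'(h)[\phi]=0$, and the task reduces to computing $T''(h)[\phi,\phi]$ explicitly and rearranging it into the form of \eqref{T32}.

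To compute the second variation, I would differentiate the above expression for $T'(h_t)[\phi]$ once more in $t$ at $t=0$. The product rule produces two contributions. The derivative of the determinant gives
\[
\partial_t\det((h_t)_{ij}+h_t\delta_{ij})|_{t=0}=\sum_{i,j}c_{ij}(\phi_{ij}+\phi\delta_{ij}),
\]
where $c_{ij}$ is the cofactor matrix of $h_{ij}+h\delta_{ij}$. The shape derivative of $|\nabla U_t(F_t(\xi))|^2$ is, by the chain rule, $2\nabla U\cdot\nabla\dot U+\dot F\cdot\nabla|\nabla U|^2$ evaluated at $F(\xi)$, with $\dot U:=\partial_t U_t|_{t=0}$ and $\dot F(\xi)=\nabla_\sn\phi(\xi)+\phi(\xi)\xi$. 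Differentiating the identity $U_t(F_t(\xi))=0$ in $t$, together with $\Delta U_t=-2$, shows that $\dot U$ is harmonic in $\Omega$ with boundary data $\dot U(F(\xi))=|\nabla U(F(\xi))|\phi(\xi)$, i.e.\ the function $\dot U$ appearing in Theorem~\ref{main}.

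The remaining work is to unravel $\dot F\cdot\nabla|\nabla U|^2$ at $\partial\Omega$. Two Hessian identities, derived from $U|_{\partial\Omega}=0$, $\nabla U|_{\partial\Omega}=-|\nabla U|\xi$, and $\Delta U=-2$, are the key inputs. For tangent vectors $Y,Z$, the Gauss--Weingarten decomposition $\nabla_YZ=\nabla^{\partial\Omega}_Y Z-{\rm II}_{\partial\Omega}(Y,Z)\xi$ yields $D^2U(Y,Z)|_{\partial\Omega}=-|\nabla U|\,{\rm II}_{\partial\Omega}(Y,Z)$; tracing and combining with $\Delta U=-2$ gives $D^2U(\xi,\xi)|_{\partial\Omega}=-2+|\nabla U|\,{\rm tr}({\rm II}_{\partial\Omega})$. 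For tangent $Y$, differentiating $|\nabla U|^2=\nabla U\cdot\nabla U$ along $Y$ produces $D^2U(\xi,Y)=-\frac{1}{2|\nabla U|}Y(|\nabla U|^2)$. Splitting $\dot F$ into its tangential part $\nabla_\sn\phi$ and normal part $\phi\xi$ and substituting, the shape-derivative formula decomposes into an interior term proportional to $\nabla\dot U\cdot\xi$, a tangential term involving $\nabla_\sn\phi\cdot\nabla|\nabla U|^2$, and normal terms proportional to $|\nabla U|\phi$ and $|\nabla U|^2\,{\rm tr}({\rm II}_{\partial\Omega})\phi$.

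Finally, integration by parts on $\sn$ using the divergence-free property $\sum_j(c_{ij})_j=0$ converts $\int_\sn\phi|\nabla U|^2\sum c_{ij}\phi_{ij}\,d\xi$ into $-\int_\sn \sum c_{ij}\phi_i\phi_j|\nabla U|^2\,d\xi$ plus a cross term $-\int_\sn\phi\sum c_{ij}\phi_i(|\nabla U|^2)_j\,d\xi$. Using the relation $c_{ij}=\det(h_{ij}+h\delta_{ij})({\rm II}_{\partial\Omega})_{ij}$ together with the symmetry $c_{ij}=c_{ji}$, this cross term cancels exactly the tangential contribution produced by the shape-derivative computation; the two ${\rm tr}$-pieces combine via ${\rm tr}(c_{ij})=\det(h_{ij}+h\delta_{ij})\,{\rm tr}({\rm II}_{\partial\Omega})$, and using $\det=1/\kappa$ the entire inequality $T''(h)[\phi,\phi]\le 0$ rearranges into \eqref{T32}. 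The main technical obstacle is exactly this last bookkeeping step: verifying that the cross terms from integration by parts cancel precisely against the tangential shape-derivative terms, and that the trace contributions combine with the correct sign and coefficient to reproduce \eqref{T32}.
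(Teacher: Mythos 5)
Your proposal follows the paper's route exactly: concavity of $T^{1/(n+2)}$ (Lemma~\ref{P}) reduces the claim to $f''(0)\le0$ once $f'(0)=0$, the three Hessian identities you derive for $D^2U$ on $\partial\Omega$ (tangent--tangent, normal--normal, mixed) are precisely the content of Lemma~\ref{UPO}, the boundary condition $\dot U=|\nabla U|\phi$ comes from differentiating $U_t\circ F_t=0$ as in Proposition~\ref{fsd}, and the closing integration by parts uses $\sum_j(c_{ij})_j=0$ just as in the paper. The one step you pass over lightly is the clean first-variation formula $f'(t)=\int_\sn\phi\,G(h_t)\,d\xi$ for \emph{arbitrary} $\phi\in C^\infty(\sn)$ rather than for support functions: the paper proves this by verifying self-adjointness of the operator $\mathcal{L}^t$ (including a Green's-identity argument for $\mathcal{L}^t_1$) together with the homogeneity relation $\mathcal{L}^t h_t=(n+1)G(h_t)$, whereas you cite \eqref{capha*}, which is stated only for Minkowski sums; this is fine because the general Hadamard formula is in Colesanti--Fimiani, but it is worth knowing that this is where the paper spends most of its effort. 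The final cancellation bookkeeping you flag as unfinished does go through with the ingredients you list.
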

Here we remark that Theorem \ref{main} and Theorem \ref{main2} can be obtained by each other through the change of variable given by the Gauss map of $\partial \Omega$.

Now, in order to prove Theorem \ref{main2}, we first list the following results proved similarly in \cite[Lemma 2.5]{J96}. For reader's convenience, we give the proof process.

\begin{lem}\label{UPO} Let $\Omega_{t}$ be a convex body of class $C^{2}_{+}$ in ${\rnnn}$, and $U(\cdot,t)$ be the solution of \eqref{capequ*} in $\Omega_{t}$, then

(i)$(\nabla^{2} U(F(\xi,t),t)e_{i})\cdot e_{j}=-\kappa |\nabla U(F(\xi,t),t)|c_{ij}(\xi,t)$;

(ii)$(\nabla^{2} U(F(\xi,t),t)\xi)\cdot \xi=\kappa |\nabla U(F(\xi,t),t)|\sum_{i} c_{ii}(\xi,t)-2$;

(iii)$(\nabla^{2} U(F(\xi,t),t)e_{i})\cdot \xi=-\kappa\sum_{j} |\nabla U(F(\xi,t),t)|_{j}c_{ij}(\xi,t)$.
\end{lem}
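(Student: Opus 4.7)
The plan is to exploit the two boundary conditions on $U$ — namely $U=0$ and $\nabla U = -|\nabla U|\nu_\Omega$ on $\partial\Omega_t$ (from Lemma~\ref{Nonfin}, which holds pointwise in the $C^2_+$ case) — together with the PDE $\Delta U = -2$ interior to $\Omega_t$. Throughout, fix $\xi\in\mathbb{S}^{n-1}$, let $X=F(\xi,t)\in\partial\Omega_t$, and use that $\{e_1,\dots,e_{n-1}\}$ spans $\xi^\perp$, which is also the tangent plane to $\partial\Omega_t$ at $X$, so that $\{e_1,\dots,e_{n-1},\xi\}$ is an orthonormal basis of $\rnnn$ at $X$.

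For (i), I would start from $U(F(\xi,t),t)=0$ and differentiate twice on $\mathbb{S}^{n-1}$. The first differentiation in direction $e_i$ gives $\nabla U\cdot F_i=0$; a second differentiation in direction $e_j$ yields
\[
(\nabla^2 U\, F_j)\cdot F_i \;=\; -\nabla U\cdot F_{ij}.
\]
Now substitute $\nabla U=-|\nabla U|\,\xi$ on $\partial\Omega_t$ and use $F_i=\sum_k (h_{ik}+h\delta_{ik})e_k$ from \eqref{hgra}; the right-hand side becomes $|\nabla U|\,\xi\cdot F_{ij}$, which up to the matrix $(h_{ij}+h\delta_{ij})$ encodes the second fundamental form. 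More efficiently, rewrite the identity as $(\nabla^2 U)(V,W)=-|\nabla U|\,\mathrm{II}(V,W)$ for any tangent $V,W$, taking $V=e_i$, $W=e_j$. Since $\mathrm{II}=D\nu_\Omega$ is the inverse of the reverse Weingarten map $D F=(h_{ij}+h\delta_{ij})$, and the symmetric matrix $(h_{ij}+h\delta_{ij})$ has inverse equal to $\kappa\,(c_{ij})$ (because $\kappa=1/\det(h_{ij}+h\delta_{ij})$ and the cofactor equals the adjugate for symmetric matrices), I conclude $\mathrm{II}(e_i,e_j)=\kappa c_{ij}$ and (i) follows.

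For (ii), I would simply take the trace of $\nabla^2 U$ in the orthonormal basis $\{e_1,\dots,e_{n-1},\xi\}$ at $X$. The equation $\Delta U=-2$ gives
\[
\sum_i (\nabla^2 U\,e_i)\cdot e_i + (\nabla^2 U\,\xi)\cdot \xi \;=\; -2,
\]
and plugging in (i) for the tangential sum immediately produces (ii).

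For (iii), I would differentiate the boundary identity $\nabla U(F(\xi,t),t)=-|\nabla U(F(\xi,t),t)|\,\xi$ on the sphere in direction $e_i$. The left-hand side becomes $(\nabla^2 U)\,F_i$, and on the right, since $\partial_{e_i}\xi=e_i$, one gets $-|\nabla U|_i\,\xi - |\nabla U|\,e_i$. Taking the inner product with $\xi$ and using $e_i\cdot\xi=0$ kills the tangential term and yields
\[
\bigl((\nabla^2 U)F_i\bigr)\cdot\xi \;=\; -|\nabla U|_i.
\]
Expanding $F_i=\sum_j(h_{ij}+h\delta_{ij})e_j$ converts this into a linear system for the unknowns $a_k:=((\nabla^2 U)e_k)\cdot\xi$ with matrix $(h_{ij}+h\delta_{ij})$ and right-hand side $-|\nabla U|_i$. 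Inverting as in part (i) (using $\kappa$ and the cofactor matrix) gives (iii). The main bookkeeping obstacle is keeping the cofactor/inverse conventions straight and justifying that the boundary relation $\nabla U=-|\nabla U|\nu_\Omega$ can be differentiated tangentially — both handled by the $C^2_+$ assumption together with Lemma~\ref{Nonfin}.
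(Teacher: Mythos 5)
Your proposal is correct and follows essentially the same route as the paper: for (i) you differentiate the normality relation $\nabla U\cdot F_i=0$ once more and invert the reverse Weingarten map, which is exactly the paper's contraction with the cofactor matrix $c_{ip}c_{jq}$; for (ii) you take the trace of $\nabla^2 U$ in the adapted frame $\{e_1,\dots,e_{n-1},\xi\}$ and use $\Delta U=-2$; for (iii) you differentiate $\nabla U=-|\nabla U|\,\xi$ and project onto $\xi$, while the paper differentiates the equivalent scalar identity $|\nabla U|=-\nabla U\cdot\xi$, and both then invert the same linear system $\sum_k w_{ik}((\nabla^2U)e_k\cdot\xi)=-|\nabla U|_i$. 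The only cosmetic difference is that you package the inversion as $\mathrm{II}_{\partial\Omega}=(h_{ij}+h\delta_{ij})^{-1}=\kappa\,(c_{ij})$ rather than writing out the cofactor contraction, but the underlying algebra is identical.
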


\begin{proof}
Assume that $h(\xi,t)\in C^{2}({\sn}\times (0,\infty))$ is the support function of $\Omega_{t}$ and let $\phi=\frac{\partial h(\xi,t)}{\partial t}$. Then, recall \eqref{hf} and \eqref{hgra}, we get $F(\xi,t)=\sum_{i}h_{i}e_{i}+h\xi$,  $\frac{\partial F(\xi,t)}{\partial t}:=\dot{F}(\xi,t)=\sum_{i}\phi_{i}e_{i}+\phi \xi$, $F_{i}(\xi,t)=\sum_{j}w_{ij}e_{j}$ with $w_{ij}:=h_{ij}+h\delta_{ij}$, and $F_{ij}(\xi,t)=\sum_{k}w_{ijk}e_{k}-w_{ij}\xi$, where $w_{ijk}$ are the covariant derivatives of $w_{ij}$. Note that, from Lemma \ref{Nonfin}, we know that $\nabla U(F(\xi,t),t)=-|\nabla U(F(\xi,t),t)|\xi$, thus we have
\begin{equation*}\label{U1Fdef}
 \nabla U \cdot F_{i}=0,
\end{equation*}
and
\begin{equation*}\label{U2Fdef}
 ((\nabla^{2}U)F_{j})\cdot F_{i}+\nabla U\cdot F_{ij}=0.
\end{equation*}
It follows that
\begin{equation}\label{U3Fdef}
\sum_{k,l} w_{ik}w_{jl}((\nabla^{2}U)e_{l})\cdot e_{k})+w_{ij}|\nabla U|=0.
\end{equation}
Multiplying both sides of ~\eqref{U3Fdef} by $c_{ip}c_{jq}$ and sum for $i, j$, we get
\begin{equation*}\label{U4Fdef}
 \sum_{k,l}\delta_{kp}\delta_{ql}((\nabla^{2}U e_{l})\cdot e_{k})=-\kappa \sum_{j}\delta_{jp}c_{jq}|\nabla U|.
\end{equation*}
Hence,
\begin{equation*}\label{U7Fdef}
(\nabla^{2}U e_{i})\cdot e_{j}=-\kappa c_{ij}|\nabla U|.
\end{equation*}
This proves $(i)$.

Second, recall that
\begin{equation*}\label{U8Fdef}
|\nabla U(F(\xi,t),t)|=-\nabla U(F(\xi,t),t)\cdot \xi.
\end{equation*}
Taking the covariant derivative of both sides, we have
\begin{align}\label{U9Fdef}
|\nabla U|_{j}&=-\nabla U\cdot e_{j}-(\nabla^{2}U)F_{j}\cdot \xi\notag\\
&=- \sum_{i} w_{ij}((\nabla ^{2}U)e_{i}\cdot \xi).
\end{align}
Multiplying both sides of \eqref{U9Fdef} by $c_{lj}$ and sum for $j$, and combining
\begin{equation*}\label{U10Fdef}
\sum_{j}c_{lj}w_{ij}=\delta_{li}\det(h_{ij}+h\delta_{ij}).
\end{equation*}
Then, we get
\begin{equation*}\label{U11Fdef}
\sum_{j}c_{ij}|\nabla U|_{j}=-\det(h_{ij}+h\delta_{ij})(\nabla^{2}U) e_{i}\cdot \xi.
\end{equation*}
Hence
\begin{equation*}\label{U12Fdef}
(\nabla^{2}U) e_{i}\cdot \xi=-\kappa \sum_{j}c_{ij}|\nabla U|_{j}.
\end{equation*}
This proves $(iii)$.

Last, employing $(i)$, $div_{\Gamma}(\nabla U)=div(\nabla U)-(\nabla^{2}U)\xi \cdot \xi$, i.e., the divergence is divided into tangential and normal components, and $\Delta U=-2$, thus we obtain
\begin{align*}\label{DIV}
(\nabla^{2}U)\xi\cdot \xi&=(\nabla^{2} U)\xi\cdot \xi-\Delta U-2\notag\\
&=-\sum_{i}(\nabla^{2}U)e_{i}\cdot e_{i}-2=\kappa \sum_{i}c_{ii}|\nabla U|-2.
\end{align*}
Hence, the proof is completed.
\end{proof}

\begin{prop}\label{fsd}
Let $h\in \mathcal{S}$, $\phi\in C^{\infty}(\sn)$ and $\varepsilon>0$ such that $h+t\phi \in \mathcal{S}$ for every $t\in (-\varepsilon,\varepsilon)$. Set $h_{t}=h+t\phi$ and $f(t)=T(h_{t})$. Then
\begin{equation*}\label{}
f^{'}(t)=\int_{\sn}\phi G(h_{t})d \xi, \quad \forall t\in (-\varepsilon,\varepsilon),
\end{equation*}
and
\begin{equation*}
\begin{split}
\label{Up3}
f^{''}(0)&=-2\int_{\sn}\phi\frac{|\nabla U(F(\xi))|}{\kappa}\nabla \dot{U}(F(\xi))\cdot \xi d \xi-\int_{\sn}\sum_{i}|\nabla U(F(\xi))|^{2}c_{ii}\phi^{2}d \xi\\
&\quad+\int_{\sn}\sum_{i,j}\phi(c_{ij}|\nabla U(F(\xi))|^{2}\phi_{i})_{j}d \xi+4\int_{\sn}\phi^{2}\frac{|\nabla U(F(\xi))|}{\kappa}d\xi,
\end{split}
\end{equation*}
where
\begin{equation*}\label{Fdef1}
G(h_{t})=|\nabla  U(F(\xi,t),t)|^{2}\det((h_{t})_{ij}+h_{t}\delta_{ij})(\xi)
\end{equation*}
with $h_{t}$ is the support function of $\Omega_{t}$ and $\dot{U}$ satisfies the following equation
\begin{equation*}
\label{Ut}
\left\{
\begin{array}{lr}
\Delta \dot{U}=0, & X\in \Omega. \\
\dot{U}(X)=|\nabla U(X)|\phi(\nu_{\Omega}(X)),  & X\in  \partial \Omega.
\end{array}\right.
\end{equation*}
\end{prop}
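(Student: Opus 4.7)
My plan is to establish the two derivative formulas separately. For $f'(t)$, the starting point is the Hadamard-type variational formula (1.5) combined with the integral representation (2.10): since $T(h_t)=\tfrac{1}{n+2}\int_{\sn} |\nabla U(F(\xi,t),t)|^2 h_t(\xi)\det((h_t)_{ij}+h_t\delta_{ij})\,d\xi$ and the perturbation is precisely by $\phi$, formula (1.5) gives $f'(t)=\int_{\sn}\phi\,|\nabla U(F(\xi,t),t)|^2 \det((h_t)_{ij}+h_t\delta_{ij})\,d\xi=\int_{\sn}\phi\,G(h_t)\,d\xi$. (For a general $\phi\in C^\infty(\sn)$ one writes $\phi$ as a difference of two support functions on small neighbourhoods to reduce to the convex-body case.) The identification of $\dot U$ comes from differentiating in $t$ the Dirichlet condition $U(F(\xi,t),t)=0$: using $\nabla U=-|\nabla U|\xi$ on $\partial \Omega_t$ and $\dot F=\sum_i\phi_i e_i+\phi\xi$, one obtains $\dot U(F(\xi,t),t)=-\nabla U\cdot\dot F=|\nabla U|\,\phi$; and $\Delta\dot U=0$ follows by differentiating $\Delta U=-2$ in $t$.

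For $f''$, I would differentiate $f'(t)=\int_{\sn}\phi\,G(h_t)\,d\xi$ again, so only $G$ depends on $t$. Writing $w_{ij}:=(h_t)_{ij}+h_t\delta_{ij}$ and using $\partial_t\det(w_{ij})=\sum_{i,j}c_{ij}(\phi_{ij}+\phi\delta_{ij})$, the product rule yields
\begin{equation*}
\partial_t G\big|_{t=0}=\det(w_{ij})\,\partial_t|\nabla U|^2+|\nabla U|^2\sum_{i,j}c_{ij}(\phi_{ij}+\phi\delta_{ij}).
\end{equation*}
The chain rule gives $\partial_t|\nabla U(F(\xi,t),t)|^2=2\nabla U\cdot\bigl[(\nabla^2 U)\dot F+\nabla\dot U\bigr]$, and using $\nabla U=-|\nabla U|\xi$ together with Lemma~3.3 (parts (ii) and (iii)) to evaluate $(\nabla^2 U\,e_i)\cdot\xi$ and $(\nabla^2 U\,\xi)\cdot\xi$, I will expand this into terms involving $c_{ij}|\nabla U|_j\phi_i$, $\phi\sum_i c_{ii}|\nabla U|$, a constant piece coming from $\Delta U=-2$, and a boundary piece $\xi\cdot\nabla\dot U$.

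Substituting back and multiplying through by $\det(w_{ij})=1/\kappa$, the explicit integrand becomes
\begin{equation*}
\phi\,\partial_t G\big|_{t=0}=2\phi|\nabla U|\sum_{i,j}c_{ij}|\nabla U|_j\phi_i+\phi|\nabla U|^2\sum_{i,j}c_{ij}\phi_{ij}-\phi^2|\nabla U|^2\sum_i c_{ii}+4\tfrac{\phi^2|\nabla U|}{\kappa}-2\phi\tfrac{|\nabla U|}{\kappa}\,\xi\cdot\nabla\dot U.
\end{equation*}
The packaging step is then to recognize the first two terms as $\phi\sum_{i,j}\bigl(c_{ij}|\nabla U|^2\phi_i\bigr)_j$: expanding this derivative by the product rule and invoking the divergence-free property $\sum_j(c_{ij})_j=0$ of the cofactor matrix of $(h_{ij}+h\delta_{ij})$ kills the cross term and reproduces exactly those two expressions. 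Integrating over $\sn$ yields the stated formula for $f''(0)$.

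I expect the main obstacle to be the bookkeeping in the chain-rule step: carefully decomposing $\dot F$ into tangential and normal parts, keeping track of which derivatives of $|\nabla U|$ are spherical covariant vs.\ ambient, and correctly inserting Lemma~3.3 so that the $-2$ contribution from $\Delta U=-2$ produces the $4\phi^2|\nabla U|/\kappa$ term. The divergence-free identity for the cofactor matrix, which is the glue that collapses the Hessian-of-$\phi$ term with the gradient term into the single expression $\phi(c_{ij}|\nabla U|^2\phi_i)_j$, is also a place where it is easy to drop a term; I would verify it as a separate computation before assembling the final identity.
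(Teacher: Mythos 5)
Your proof is correct in substance, but your route to the first‐derivative formula is genuinely different from the paper's and glosses over one step. The paper differentiates the integral representation \eqref{captatget2} directly, which produces
$f'(t)=\tfrac{1}{n+2}\int_{\sn}\bigl(\phi\,G(h_t)+h_t\,\tfrac{d}{dt}G(h_t)\bigr)\,d\xi$;
it then writes $\tfrac{d}{dt}G(h_t)=\mathcal L^t\phi$, proves that the operator $\mathcal L^t$ is self-adjoint on $L^2(\sn,d\xi)$ (the substantive point being a Green-identity argument in $\Omega_t$ for the $\nabla\dot U\cdot\xi$ term, equations \eqref{Uderva2}--\eqref{L1selfad}), and uses the degree-$(n+1)$ Euler homogeneity of $G$ to get $\int h_t\,\mathcal L^t\phi=\int\phi\,\mathcal L^t h_t=(n+1)\int\phi\,G(h_t)$, which collapses the expression to $f'(t)=\int\phi\,G(h_t)\,d\xi$. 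You instead invoke the Colesanti--Fimiani Hadamard formula \eqref{capha*}, which bypasses the self-adjointness verification entirely — the paper's main technical overhead at this stage. The cost is that \eqref{capha*} as stated is a one-sided derivative at $t=0^+$ in the direction of a support function; your parenthetical ``write $\phi$ as a difference of support functions'' is the right idea, but it needs to be carried out: e.g.\ write $h_t+s\phi=(1-sC)h_t+s(\phi+Ch_t)$ with $C$ large enough that $\phi+Ch_t\in\mathcal S$, use the homogeneity $T(\lambda\Omega)=\lambda^{n+2}T(\Omega)$ to reduce to a genuine Minkowski combination to which \eqref{capha*} applies, and check that left and right derivatives agree. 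This is routine but is a real step, not automatic. Your computation of $f''(0)$ is essentially the same as the paper's: you expand $\partial_t G$ via the product rule, the chain rule applied to $|\nabla U(F(\xi,t),t)|$, Lemma~\ref{UPO}, and the cofactor-divergence identity $\sum_j(c_{ij})_j=0$; your final integrand for $\phi\,\partial_t G\big|_{t=0}$ matches the paper's \eqref{Gcal3} exactly, including the $4\phi^2|\nabla U|/\kappa$ term coming from $\Delta U=-2$.
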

\begin{proof}
 By adopting similar arguments as \cite[Lemma 3.1]{C15}, we first conclude that $U(F(\xi,t),t)$ is differentiable with respect to $t$. Based on this fact,  then we directly compute
\begin{equation}
\begin{split}
\label{capcalcu}
f^{'}(t)&=\frac{d}{ds}T(h_{t}+s\phi)\big|_{s=0}\\
&=\frac{1}{n+2}\int_{{\sn}}(\phi G(h_{t})+h_{t}\frac{d}{dt}G(h_{t}))d \xi, \ \forall \xi\in {\sn},
\end{split}
\end{equation}
where
\begin{equation}\label{Fdef1*}
G(h_{t})=|\nabla  U(F(\xi,t),t)|^{2}\det((h_{t})_{ij}+h_{t}\delta_{ij})(\xi),
\end{equation}
 with
 \begin{equation*}\label{Fdef2}
 F(\xi)=\nu^{-1}_{\Omega}(\xi)=\nabla h(\xi)=\sum_{i}h_{i}(\xi)e_{i}(\xi)+h(\xi)\xi.
\end{equation*}
By \eqref{Fdef1*}, we have
 \begin{equation}
\begin{split}
\label{Gcal}
\frac{d}{dt}G(h_{t})&=\frac{d}{ds}G(h_{t}+s\phi)\big|_{s=0}=\frac{d}{dt}[|\nabla  U(F(\xi,t),t)|^{2}\det((h_{t})_{ij}+h_{t}\delta_{ij})]\\
&=|\nabla U(F(\xi,t),t)|^{2}(c^{t}_{ij}(\phi_{ij}+\phi\delta_{ij}))+\det((h_{t})_{ij}+h_{t}\delta_{ij})\frac{d}{dt}(|\nabla U(F(\xi,t),t)|^{2})\\
&=|\nabla U(F(\xi,t),t)|^{2}(c^{t}_{ij}(\phi_{ij}+\phi\delta_{ij}))+2|\nabla U(F(\xi,t),t)|\det((h_{t})_{ij}+h_{t}\delta_{ij})\frac{d}{dt}(|\nabla U(F(\xi,t),t)|),
\end{split}
\end{equation}
where $c^{t}_{ij}$ is the cofactor matrix of $((h_{t})_{ij}+h_{t}\delta_{ij})$.

In view of the fact that $U(F(\xi,t),t)$ is differentiable with respect to $t$. Since $|\nabla U(F(\xi,t),t)|=-\nabla U(F(\xi,t),t)\cdot \xi$, $F(\xi,t)=\sum_{i}(h_{t})_{i}e_{i}+h_{t}\xi$, and $\dot{F}(\xi,t)=\sum_{i}\phi_{i}e_{i}+\phi\xi$, then we get
 \begin{equation}
\begin{split}
\label{GraU}
&\frac{d}{dt}(|\nabla U(F(\xi,t),t)|)\\
&=-\nabla^{2}U(F(\xi,t),t)\xi\cdot \dot{F}(\xi,t)-\nabla \dot{U}(F(\xi,t),t)\cdot \xi\\
&=-\nabla^{2}U(F(\xi,t),t))\xi\cdot (\phi_{i}e_{i}+\phi\xi)-\nabla \dot{U}(F(\xi,t),t)\cdot \xi.
\end{split}
\end{equation}
Applying Lemma \ref{UPO} into \eqref{GraU}, we have
 \begin{equation}
\begin{split}
\label{GraU2}
&\frac{d}{dt}(|\nabla U(F(\xi,t),t)|)\\
&=-\nabla^{2}U(F(\xi,t),t)\xi\cdot (\phi_{i}e_{i}+\phi\xi)-\nabla \dot{U}(F(\xi,t),t)\cdot \xi\\
&=\kappa c^{t}_{ij}|\nabla U|_{j}\phi_{i}-\kappa |\nabla U|c^{t}_{ii}\phi-\nabla \dot{U}(F(\xi,t),t)\cdot \xi+2\phi.
\end{split}
\end{equation}
Substituting \eqref{GraU2} into \eqref{Gcal}, we have
 \begin{equation}
\begin{split}
\label{Gcal2}
&\frac{d}{dt}G(h_{t})\\
&=|\nabla U(F(\xi,t),t)|^{2}c^{t}_{ij}(\phi_{ij}+\phi\delta_{ij})\\
&\quad+2|\nabla U(F(\xi,t),t)|\det((h_{t})_{ij}+h_{t}\delta_{ij})[\kappa c^{t}_{ij}|\nabla U|_{j}\phi_{i}-\kappa |\nabla U(F(\xi,t),t)|c^{t}_{ii}\phi-\nabla \dot{U}(F(\xi,t),t)\cdot \xi+2\phi]\\
&=-2\frac{|\nabla U(F(\xi,t),t)|}{\kappa}\nabla \dot{U}(F(\xi,t),t)\cdot \xi-2|\nabla U(F(\xi,t),t)|^{2}c^{t}_{ii}\phi+4\phi\frac{|\nabla U(F(\xi,t),t)|}{\kappa}\\
&\quad +2c^{t}_{ij}|\nabla U(F(\xi,t),t)||\nabla U(F(\xi,t),t)|_{j}\phi_{i}+|\nabla U(F(\xi,t),t)|^{2}c^{t}_{ij}(\phi_{ij}+\phi\delta_{ij}).
\end{split}
\end{equation}
For simplifying \eqref{Gcal2}, we apply the conclusion that  $\Sigma_{j}c_{ijj}=0$ proved by \cite{CY76}, where $c_{ijl}$ is the covariant derivative tensor of $c_{ij}$.

One see
 \begin{equation}
\begin{split}
\label{Gcal3}
&\frac{d}{dt}G(h_{t})\\
&=-2\frac{|\nabla U(F(\xi,t),t)|}{\kappa}\nabla \dot{U}(F(\xi,t),t)\cdot \xi-|\nabla U(F(\xi,t),t)|^{2}c^{t}_{ii}\phi\\
&\quad +(c^{t}_{ij}|\nabla U(F(\xi,t),t)|^{2}\phi_{i})_{j}+4\phi\frac{|\nabla U(F(\xi,t),t)|}{\kappa}\\
&:=\mathcal{L}^{t}(\phi),
\end{split}
\end{equation}
 we hope that the operator $\mathcal{L}^{t}$ is self-adjoint on $L^{2}({\sn},d\xi)$, to realize it, for $j=1,2,3,4$, set $\mathcal{L}^{t}_{j}$ as
 \begin{equation*}\label{L1}
\mathcal{L}^{t}_{1}=-2\frac{|\nabla U(F(\xi,t),t)|}{\kappa}\nabla \dot{U}(F(\xi,t),t)\cdot \xi,
\end{equation*}
\begin{equation*}\label{L2}
\mathcal{L}^{t}_{2}=-|\nabla U(F(\xi,t),t)|^{2}c^{t}_{ii}\phi,
\end{equation*}
\begin{equation*}\label{L3}
\mathcal{L}^{t}_{3}=(c^{t}_{ij}|\nabla U(F(\xi,t),t)|^{2}\phi_{i})_{j},
\end{equation*}
and
\begin{equation*}\label{L4}
\mathcal{L}^{t}_{4}=4\phi\frac{|\nabla U(F(\xi,t),t)|}{\kappa}.
\end{equation*}
It is necessary to verify that the operators $\mathcal{L}^{t}_{1}, \mathcal{L}^{t}_{2}, \mathcal{L}^{t}_{3}$ and $\mathcal{L}^{t}_{4}$ are self-adjoint on $L^{2}({\sn},d\xi)$ respectively, i.e., for $\phi_{1}(\xi), \phi_{2}(\xi)\in \mathcal{S}$, satisfying
\begin{equation}\label{selfad}
\int_{{\sn}}\phi_{1}(\xi)\mathcal{L}^{t}_{j}(\phi_{2}){d}{\xi}=\int_{{\sn}}\phi_{2}(\xi)\mathcal{L}^{t}_{j}(\phi_{1}){d}{\xi}, \quad j=1,2,3,4.
\end{equation}
Obviously, $\mathcal{L}^{t}_{2}$, $\mathcal{L}^{t}_{4}$ is self-adjoint, and $\mathcal{L}^{t}_{3}$ is self-adjoint by applying integration by parts. Now, we focus on $\mathcal{L}^{t}_{1}$.

Since $\nabla\cdot (\nabla U_{i}(X,t,s))=-2$ in $\Omega_{i,t,s}$, and $U_{i}(\cdot)$ is differentiable with regard to $s$, which can be proved along similar lines showed by \cite[Lemma 3.1]{C15}, thus at $s=0$, we can get
\begin{equation*}\label{Uderva1}
\left\{
\begin{array}{lr}
\nabla \cdot (\nabla \dot{U}_{1})= 0,  \\
\nabla \cdot (\nabla \dot{U}_{2})= 0.
\end{array}\right.
\end{equation*}
So,
 \begin{equation}
\begin{split}
\label{Uderva2}
0&=\dot{U}_{2}\nabla \cdot(\nabla\dot{ U}_{1})-\dot{U}_{1}\nabla \cdot (\nabla \dot{U}_{2})\\
&=\nabla\cdot (\nabla \dot{U}_{1} \dot{U}_{2})-\nabla\cdot (\nabla \dot{U}_{2} \dot{U}_{1}).
\end{split}
\end{equation}
Here $\dot{U}_{i}:=\dot{U}_{i}(X,t)=\frac{\partial}{\partial t}(U_{i}(X,t))$. Employing \eqref{Uderva2}, we have
\begin{align}\label{Uderva3}
&\int_{\partial \Omega_{t}}\dot{U}_{1}(X,t)(\nu_{\Omega_{t}}(X)\cdot \nabla \dot{U}_{2}(X,t)){d}{\mathcal{H}^{n-1}(X)}\notag\\
&=\int_{\partial \Omega_{t}}\dot{U}_{2}(X,t)(\nu_{\Omega_{t}}(X)\cdot \nabla \dot{U}_{1}(X,t)){d}{\mathcal{H}^{n-1}(X)}.
\end{align}
Due to $U_{i}(F(\xi,t,s),t,s)=0$ on $\partial \Omega_{i,t,s}$ and $U_{i}(\cdot)$ is differentiable with respect to $s$, we take the differentiation  of both sides with respect to $s$ and find at $s=0$,
\begin{equation*}\label{Uderva4}
\dot{U}_{i}(F(\xi,t),t)+\nabla U(F(\xi,t),t)\cdot \dot{F}(\xi,t)=0,
\end{equation*}
it is further deduced as
\begin{align}\label{Uderva5}
\dot{U}_{i}(F(\xi,t),t)&=-\nabla U(F(\xi,t),t)\cdot (\sum_{k}(\phi_{i})_{k}e_{k}+\phi_{i}\xi)\notag\\
&=|\nabla U(F(\xi,t),t)|\xi\cdot(\sum_{k}(\phi_{i})_{k}e_{k}+\phi_{i}\xi)\notag\\
&=|\nabla U(F(\xi,t),t)|\phi_{i}.
\end{align}
Hence, for $X\in \partial \Omega_{t}$, $\dot{U_{i}}(X,t)=|\nabla U(X,t)|\phi_{i}(\nu_{\Omega_{t}}(X))$, substituting ~\eqref{Uderva5} into ~\eqref{Uderva3}, we get
 \begin{equation}
\begin{split}\label{L1selfad}
&\int_{\partial \Omega_{t}}\phi_{1}(\nu_{\Omega_{t}}(X))|\nabla U(X,t)|(\nu_{\Omega_{t}}(X)\cdot \nabla \dot{U}_{2}(X,t)){d}{\mathcal{H}^{n-1}(X)}\\
&=\int_{\partial \Omega_{t}}\phi_{2}(\nu_{\Omega_{t}}(X))|\nabla U(X,t)|(\nu_{\Omega_{t}}(X)\cdot \nabla \dot{U}_{1}(X,t)){d}{\mathcal{H}^{n-1}(X)},
\end{split}
\end{equation}
 \eqref{L1selfad} indicates that $\mathcal{L}^{t}_{1}$ is self-adjoint.

From above results, we conclude that, $\mathcal{L}^{t}$ is self-adjoint on $L^{2}({\sn},d\xi)$. In view of \eqref{Fdef1*}, $G(h_{t})$ is positively homogeneous of order $(n+1)$, if $\phi=h_{t}$, $G(h_{t}+sh_{t})=G((1+s)h_{t})=(1+s)^{n+1}G(h_{t})$, then we have
\begin{equation}\label{GVhom}
\frac{d}{ds}G((1+s)h_{t})\Big|_{s=0}=(n+1)G(h_{t})=\mathcal{L}^{t}h_{t}.
\end{equation}
Applying \eqref{GVhom} into \eqref{capcalcu}, we obtain
\begin{equation}
\begin{split}
\label{Fcapcalcu}
\frac{d}{dt}f(t)&=\frac{1}{n+2}\int_{{\sn}}(\phi G(h_{t})+h_{t}\mathcal{L}^{t}\phi){d}{\xi}\\
&=\frac{1}{n+2}\int_{{\sn}}(\phi G(h_{t})+\phi\mathcal{L}^{t}h_{t}){d}{\xi}\\
&=\frac{1}{n+2}\int_{{\sn}}(\phi G(h_{t})+(n+1)\phi G(h_{t})){d}{\xi}\\
&=\int_{{\sn}}\phi G(h_{t}){d}{\xi},
\end{split}
\end{equation}
namely,
\begin{align*}\label{Fcapcalcu2}
f^{'}(t)&=\int_{{\sn}}\phi(\xi)|\nabla  U(F(\xi,t),t)|^{2}\det((h_{t})_{ij}(\xi)+h_{t}(\xi)\delta_{ij}){d}{\xi}.
\end{align*}
Using \eqref{Fcapcalcu}, we get
\begin{equation*}
\begin{split}
\label{Up3}
f^{''}(0)&=\int_{\sn}\phi\mathcal{L} \phi d\xi\\
&=-2\int_{\sn}\phi \frac{|\nabla U(F(\xi))|}{\kappa}\nabla \dot{U}(F(\xi))\cdot \xi d \xi-\int_{\sn}\sum_{i}|\nabla U(F(\xi))|^{2}c_{ii}\phi^{2}d \xi\\
&\quad +\int_{\sn}\sum_{i,j}\phi(c_{ij}|\nabla U(F(\xi))|^{2}\phi_{i})_{j}d \xi+4\int_{\sn}\phi^{2}\frac{|\nabla U(F(\xi)|}{\kappa}d\xi.
\end{split}
\end{equation*}
Hence, Proposition \ref{fsd} is completed.

{\bf Proof of Theorem \ref{main2}}. Assume first that $\phi\in C^{\infty}(\sn)$. Let $\varepsilon>0$ be such that $h+t\phi\in \mathcal{S}$ for every $t\in (-\varepsilon,\varepsilon)$. Set  $f(t)=T(h+t\phi)$ and $\phi(t)=f(t)^{1/(n+2)}$ for $t\in (-\varepsilon,\varepsilon)$. Recall Lemma \ref{P}, we know that $\phi$ is concave, then we get
\begin{equation*}\label{}
\phi^{''}(0)=\frac{1}{n+2}\left( \frac{1}{n+2}-1 \right)f(0)^{\frac{1}{n+2}-2}f^{'}(0)^{2}+\frac{1}{n+2}f(0)^{\frac{1}{n+2}-1}f^{''}(0)\leq 0.
\end{equation*}
If $f^{'}(0)=0$, i.e., the first equality of \eqref{PE} holds, thus $f^{''}(0)\leq 0$. Then, we have
\begin{equation}
\begin{split}
\label{UR}
&-\int_{\sn}{\rm tr}(c_{ij})|\nabla U(F(\xi))|^{2}\phi^{2}d \xi-2\int_{\sn}\phi\frac{|\nabla U|}{\kappa}\nabla \dot{U}(F(\xi))\cdot \xi d \xi+4\int_{\sn}\phi^{2}\frac{|\nabla U(F(\xi))|}{\kappa}d\xi\\
&\quad \leq -\int_{\sn}\phi \sum_{i,j}(c_{ij}|\nabla U(F(\xi))|^{2}\phi_{i})_{j}d \xi.
\end{split}
\end{equation}
So we get inequality \eqref{T32} by using integration by parts in the r.h.s of \eqref{UR}. The general case $\phi\in C^{1}(\sn)$ follows by a standard approximation argument. The proof is completed.
\end{proof}
\begin{rem}\label{rme}
Denote by  $\xi_{0}$ a fixed vector in $\rnnn$. Let $h\in \mathcal{S}$ and $\phi(\xi)=\xi \cdot \xi_{0}$ for $\xi\in \sn$. Since $T(\cdot)$ is translation invariant, we have
\[
T(h+t\phi)=T(\Omega+t\xi_{0})=T(\Omega) \quad \forall t\in \R.
\]
Then we get $f^{'}(0)=0$ and $f^{''}(0)=0$. In view of Theorem \ref{main2}, we conclude that \eqref{T31} is satisfied and that \eqref{T32} is an equality.
\end{rem}
{\bf Proof of Theorem \ref{main}}. Let $\psi \in C^{1}(\partial \Omega)$ and set $\phi(\xi)=\psi(F(\xi))$ for every $\xi\in \sn$, then $\phi \in C^{1}(\sn)$. By applying $\xi=\nu_{\Omega}(X)$, we get
 \begin{equation*}\label{}
\int_{\sn}\phi(\xi)\det(h_{ij}(\xi)+h(\xi)\delta_{ij})d \xi=\int_{\partial \Omega}\psi d \mathcal{H}^{n-1}(X).
\end{equation*}
Analogously,
\begin{equation}\label{re1}
\int_{\sn}{\rm tr}(c_{ij}(\xi))|\nabla U(F(\xi))|^{2}\phi^{2}(\xi)d\xi=\int_{\partial \Omega}{\rm tr}(D \nu_{\Omega})|\nabla U(X)|^{2}\psi^{2}d \mathcal{H}^{n-1}(X),
\end{equation}
and
\begin{equation*}
\begin{split}
\label{}
\sum_{i,j}c_{ij}(\xi)|\nabla U(F(\xi))|^{2}\phi_{i}(\xi)\phi_{j}(\xi)
&=\det(h_{ij}+h\delta_{ij})(\xi)|\nabla U(F(\xi))|^{2}(\nabla_{\sn} \phi(\xi)\cdot D\nu_{\Omega}\nabla_{\sn} \phi(\xi)).
\end{split}
\end{equation*}
Then
\begin{equation}
\begin{split}
\label{re2}
\int_{\sn}\sum_{i,j}c_{ij}(\xi)|\nabla U(F(\xi))|^{2}\phi_{i}(\xi)\phi_{j}(\xi)d \xi&=\int_{\partial \Omega}|\nabla U(X)|^{2}((D\nu_{\Omega})^{-1}\nabla_{\partial \Omega} \psi\cdot \nabla_{\partial \Omega} \psi)d \mathcal{H}^{n-1}(X).
\end{split}
\end{equation}
Combining  \eqref{re1}, and \eqref{re2} with Theorem \ref{main2} and ${\rm II}_{\partial \Omega}=D\nu_{\Omega}$, we conclude that Theorem \ref{main} holds. Hence, Theorem \ref{main} is completed.


\end{document}